\newcommand{\ds}{\displaystyle}
\newtheorem{theorem}{Theorem} [section]
\newtheorem{lemma}[theorem]{Lemma}
\newtheorem{proposition}[theorem]{Proposition}
\newcommand{\inn}[1]{\langle#1\rangle} 
\newcommand{\dd}{{\mathrm d}}  
\newcommand{\RR}{{\mathbb R}}  
\newcommand{\CC}{{\mathbb C}}  
\newcommand{\ii}{{\rm i}}  
\newcommand{\ra}{\rightarrow}
\newcommand{\pa}{\partial}
\renewcommand{\phi}{\varphi}
\newcommand{\na}{\nabla}
\newcommand{\al}{\alpha}
\newcommand{\be}{\beta}
\newcommand{\om}{\omega}
\newcommand{\ov}{\overline} 
\renewcommand{\Re}{{\rm Re}\,} 
\renewcommand{\Im}{{\rm Im}\,} 
\begin{document}

\title{A class of analytic pairs of conjugate functions in dimension three}
\author{Paul Baird and Elsa Ghandour}

\address{Laboratoire de Math\'ematiques de Bretagne Atlantique UMR 6205 \\
Universit\'e de Brest \\
6 av.\ Victor Le Gorgeu -- CS 93837 \\
29238 Brest Cedex, France}

\email{Paul.Baird@univ-brest.fr, elsa.ghandour@hotmail.com}

\begin{abstract} 
We exploit an ansatz in order to construct power series expansions for pairs of conjugate functions defined on domains of Euclidean $3$--space.  Convergence properties of the resulting series are investigated.  Entire solutions which are not harmonic are found as well as a $2$-parameter family of examples which contains the Hopf map.      
\end{abstract}

\keywords{Semi-conformal map, conjugate function, analytic mapping, formal power series, generating function, entire mapping}

\subjclass[2000]{35A10, 58K20}

\maketitle

\thispagestyle{empty}


\section{Introduction}   Following the first author and M. G. Eastwood \cite{Ba-Ea}, we will call a pair of differentiable real-valued functions $f$ and $g$ on a Riemannian manifold $M$ \emph{conjugate} if their gradients are everywhere orthogonal and of the same length: 
$$
\forall x \in M, \qquad ||\na f(x)|| = || \na g(x)|| \quad {\rm and}  \quad \inn{\na f(x), \na g(x)}= 0\,,
$$
 If we write $\phi = f + \ii g$, then these two equations can be summarized neatly by the equation 
$$
(\na \phi )^2 = 0\,;
$$
which says that the (complex) vector field $\na \phi $ (a section of the complexified tangent bundle $T^{\CC}M$) is isotropic at each point.  If $M = \RR^n$ with its standard metric, then this becomes the first order equation
\begin{equation} \label{sc-1}
\sum_{i = 1}^n \left(\frac{\pa \phi}{\pa x_i}\right)^2 = 0\,,
\end{equation}
where $(x_1, \ldots , x_n)$ are standard coordinates for $\RR^n$.   When $n =3$, it is shown in \cite{Ba-Ea-0} that a solution extends locally to an integrable Hermitian structure on a domain of $\RR^4$.  In the case when $n = 2$, on writing $\ds \frac{\pa}{\pa z} = \frac{1}{2} \left( \frac{\pa}{\pa x_1} - \ii \frac{\pa}{\pa x_2}\right)$ and   $\ds \frac{\pa}{\pa \ov{z}} = \frac{1}{2} \left( \frac{\pa}{\pa x_1} + \ii \frac{\pa}{\pa x_2}\right)$, we have
$$
\frac{\pa\phi}{\pa z}\frac{\pa \phi}{\pa \ov{z}} = 0\,,
$$
whose solutions locally correspond to holomorphic or anti-holomorphic functions.  Thus the condition we are discussing is a natural generalization of holomorphicity in the plane.   However, for $n \geq 3$, we cannot in general separate \eqref{sc-1} into a product of linear conditions. 

For a differentiable mapping $\phi : U \subset \RR^n \ra \CC$, where the domain and codomain are endowed with their canonical metrics, the condition \eqref{sc-1} is equivalent to the requirement that $\phi$ be \emph{semi-conformal}, that is, for each $x \in U$, the derivative $\dd \phi_x$ is either identically zero, or the restriction to the horizontal space $\dd \phi_x\vert_{(\ker \dd \phi_x)^{\bot}} : (\ker \dd \phi_x)^{\bot} \ra T_{\phi (x)}\CC$ is conformal and surjective.  This is one of two conditions that characterizes a \emph{harmonic morphism}, which, by definition is a mapping which pulls back local harmonic functions to harmonic functions; the other condition being the harmonicity of $\phi$.  This characterization of harmonic morphisms between Riemannian manifolds was established independently in 1978 by B. Fuglede \cite{Fu}, in 1979 by Y. Ishihara \cite{Is}, and in the context of mappings which preserve Brownian motion, in 1979 by A. Bernard, E. Campbell and A. M. Davie \cite{Be-Ca-Da}, although the notion dates back to 1967 in the more general context of Brelot harmonic spaces \cite{Br}.

In the case when a semi-conformal map $\phi$ takes values in a surface, then by conformal invariance, on choosing isothermal coordinates, we may suppose that locally it takes values in the complex plane and so can be written in the form $\phi = f + \ii g$ for two real-valued conjugate functions.  In the article \cite{Ba-Ea}, the question was asked as to whether there exists a differential condition on a function $f : U \ra \RR$ ($U$ open in $\RR^n$) in order that it admit a conjugate.  For $n = 2$, the answer is well known, since if $f$ admits a conjugate $g$, then $f + \ii g$ is $\pm$--holomorphic and in particular \emph{harmonic}.  Conversely, if $f$ is harmonic on a simply connected domain, then it admits a conjugate.  Indeed, on writing $f_1 = \pa f / \pa x_1$ etc., the $1$--form $\om = -f_2 \dd x_1 + f_1 \dd x_2$ is closed and so there exists a function $g$ such that $\om = \dd g$ and then $g$ is conjugate to $f$.  In the case when $n = 3$, the question becomes remarkably difficult to answer. 
It was shown in \cite{Ba-Ea} that there is a 2nd order differential inequality and three 3rd order equations which characterize such functions, all conformally invariant. 

The complexity of the equations makes it hard to find concrete examples.  Functions with spherical and cylindrical symmetry were characterized in \cite{Ba-Ea}, where also an ansatz was applied to obtain examples from solutions to a first order equation in two variables.  However, only a handful of simple solutions were found by this method.  It is our aim in this article to explore this ansatz more fully and to characterize the analytic solutions.

 Any semi-conformal polynomial map is automatically harmonic \cite{Ab-Ba-Br}, and any entire harmonic morphism on $\RR^3$ with values in a surface is an orthogonal projection followed by a weakly conformal map \cite{Ba-Wo-0}.  In \S\ref{sec:one}, by the ansatz, we construct entire analytic semi-conformal maps on $\RR^3$ with values in a surface which are not harmonic (and so not simple projections).  In \S\ref{sec:two} we find a two parameter family of analytic semi-conformal maps which contains the Hopf map as a special case.  

\section{An ansatz for solutions}  Consider $\RR^3$ with coordinates $(x, y, z)$ and define the variable $\ds u = \tfrac{x^2+y^2}{2}$.  Suppose that $\phi = f + \ii g$ can be expressed in the form $\phi (x,y,z) = (x + \ii y)u^{-q} \psi (u,z)$, for some \emph{complex}-valued function $\psi$ analytic in a neighbourhood of $(u,z) = (0,0)$ and for some integer $q\geq 0$.  We suppose also that $\psi$ doesn't contain a factor of $u$ so $q$ is the smallest integer which allows $\phi$ to be written in this way.  Write $\phi_x$ for $\pa \phi / \pa x$, and so on.  Then 
$$
\phi_x{}^2+\phi_y{}^2+\phi_z{}^2 = 2(x+ \ii y)^2u^{-2q-1} \{ q(q-1)\psi^2 + u(1-2q)\psi \psi_u +u^2 \psi_u{}^2 + \tfrac{1}{2}u \psi_z{}^2\}\,,
$$
so that $\phi$ is semi-conformal (equivalently $f$ and $g$ are conjugate) if and only if 
\begin{equation} \label{ans-1}
q(q-1)\psi^2 + u(1-2q)\psi \psi_u +u^2 \psi_u{}^2 + \tfrac{1}{2}u \psi_z{}^2=0\,.
\end{equation}
We have thus reduced the problem to one involving just two variables.  
Note that any solution $\phi$ to \eqref{sc-1} can be multiplied by a constant to give another solution, so that if $\psi (0,0) \neq 0$, we may suppose that $\psi (0,0) = 1$.  Furthermore, if $\psi (0,0) \neq 0$, then on setting $u=0$ in \eqref{ans-1}, we see that we must have $q = 0$ or $1$.  In what follows we will assume that this is the case and so we are led to consider the two equations
\begin{eqnarray}
\psi \psi_u +u \psi_u{}^2 + \tfrac{1}{2} \psi_z{}^2 & = & 0  \qquad (q=0) \label{q0} \\
- \psi \psi_u +u \psi_u{}^2 + \tfrac{1}{2} \psi_z{}^2 & = & 0  \qquad (q=1) \label{q1}
\end{eqnarray}

A particular solution of \eqref{q1} is given by the polynomial 
\begin{equation} \label{Hopf} 
\psi (u,z) = 1 - 2u - z^2 - 2\ii z,
\end{equation}
 which corresponds to the Hopf map $\phi : \RR^3 \ra \CC \cup \{ \infty\}$ whose fibres are the circles of Villarceau.  This map may be constructed by taking the Hopf fibration $S^3 \ra S^2$ and pre-composing and post-composing by the inverse of stereographic projection and stereographic projection, respectively.

 In what follows, we will show that the free parameters in the general analytic solution to \eqref{q0} and \eqref{q1} are the values at the origin of the successive derivatives $\psi_z, \psi_{zz}, \psi_{zzz}, \ldots$.    We will then set up general recurrence relations that enable the general solution to be computed in principle, however, to find explicit expressions still remains difficult.  We investigate in detail the solution to \eqref{q0} corresponding to the given data at the origin: $\psi_z(\vec{0}) = 0, \psi_{z}(\vec{0}) = c$ with all other derivatives $\ds \frac{\pa^{\ell}\psi}{\pa z^{\ell}} (\vec{0}) = 0$ for $\ell \geq 2$, where $c$ is an arbitrary complex parameter.  We then investigate the solution to \eqref{q1} corresponding to the given data at the origin: $\psi_z(\vec{0}) = 0, \psi_{z}(\vec{0}) = c_1, \psi_{zz}(\vec{0}) = c_2$ with all other derivatives $\ds \frac{\pa^{\ell}\psi}{\pa z^{\ell}} (\vec{0}) = 0$ for $\ell \geq 3$, where $c_1$ and $c_2$ are arbitrary complex parameters.  Amongst this family of solution we find the Hopf map discussed in the previous paragraph.  In particular the Hopf map belongs to a $2$-complex parameter family of semi-conformal mappings.  

\section{Algebraic preliminaries} Consider a complex-valued analytic function $\psi (u,z)$ expressed in the form
\begin{equation} \label{solution}
\psi (u,z) = \sum_{k,\ell = 0}^{\infty} a_{k,\ell}\,u^kz^{\ell}\,, 
\end{equation}
for constants $a_{k,\ell}$.  For background on analytic functions in several variables we refer the reader to \S 9 of the classical text of J. Dieudonn\'e \cite{Di}.  In particular, if the series converges absolutely on an open set, then it determines a well-defined infinitely differentiable function on that set.  We are interested in domains of convergence in a neighbourhood of $(0,0)$.  For a solution to be interpreted as a semi-conformal map on a domain of $\RR^3$, we require that $u\geq 0$.  An example, given in \cite{Ba-Ea}, is the function
$$
\psi (u,z) = \frac{be^{cz}e^{\sqrt{1-2c^2u}}}{1 + \sqrt{1 - 2c^2u}}\,,
$$
where $b$ and $c$ are arbitrary constants.  This solution has product form and is analytic in a neighbourhood of the origin.  

More generally the Cauchy-Kowalewski Theorem can be applied to deduce the existence of analytic solutions to \eqref{q0} and \eqref{q1}.  Specifically, if we write the equations in the form:
\begin{equation}\label{CK}
\psi_z = \left(  \pm 2 \psi \psi_u -2 u \psi_u{}^2\right)^{1/2},
\end{equation}
then, given the boundary condition $\psi (u,0) = \xi (u)$ for some analytic function $\xi$ defined in a neighbourhood of $u = 0$ with $\xi (0)\xi^{\prime}(0) \neq 0$, there is a unique analytic solution $\psi (u,z)$ extending this boundary value in a neighbourhood of the point $(u,z) = (0,0)$ \cite{Ko}.

In order to find  recursive relations between the constants $a_{k,\ell}$, we will compute successive derivates at the origin:
\begin{equation} \label{der-eq}
\frac{\pa^n}{\pa u^k\pa z^{\ell}}\left\{ \pm \psi \psi_u + u \psi_u{}^2 + \tfrac{1}{2} \psi_z{}^2\right\} \vert_{(u,z) = 0}\quad (k+\ell = n)
\end{equation} 
and equate these to zero.   In what follows, for ease of notation, we set
$$
\psi_{k,\ell} = \frac{\pa^{k+ \ell}\psi}{\pa u^k\, \pa z^{\ell}}(0,0)\,.
$$
Thus $\ds a_{k,\ell} = \frac{\psi_{k, \ell}}{k!\ell !}$.  The fundamental identity we exploit is given by the following proposition, whose proof is given in Appendix \ref{app:calcs}. 

\begin{proposition} \label{prop:fund} A necessary and sufficient condition that \eqref{der-eq} vanish is that for each $k \geq 1$ and $\ell \geq 0$, we have
\begin{eqnarray} 
& \ds \sum_{j = 0}^{\ell} \sum_{i = 0}^k (k- i \pm 1)\left( \begin{array}{c} \ell \\ j \end{array} \right) \left( \begin{array}{c} k \\ i \end{array} \right) \psi_{k-i, \ell - j}\, \psi_{i+1,j} &  \label{eq1} \\
& \ds \qquad \qquad +  \sum_{j = 0}^{\ell} \sum_{i = 0}^{k-1} \left( \begin{array}{c} \ell \\ j \end{array} \right)\left( \begin{array}{c} k-1 \\ i \end{array} \right)\psi_{k-i-1, \ell - j + 1}\, \psi_{i+1, j+1} &  = 0\,. \nonumber
\end{eqnarray}
\end{proposition}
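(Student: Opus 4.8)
The plan is to show that the left-hand side of \eqref{eq1} is nothing other than the expression \eqref{der-eq} written out explicitly; once this polynomial identity in the derivative values $\psi_{a,b}$ is established, the vanishing of one side is equivalent to the vanishing of the other, so necessity and sufficiency follow together. Writing $D^{a,b}F = \frac{\pa^{a+b}F}{\pa u^a\,\pa z^b}(0,0)$, so that $D^{a,b}\psi = \psi_{a,b}$, $D^{a,b}\psi_u = \psi_{a+1,b}$ and $D^{a,b}\psi_z = \psi_{a,b+1}$, I would apply the generalised Leibniz rule $D^{k,\ell}(FG) = \sum_{i=0}^{k}\sum_{j=0}^{\ell}\binom{k}{i}\binom{\ell}{j}(D^{i,j}F)(D^{k-i,\ell-j}G)$ separately to the three terms $\pm\psi\psi_u$, $u\psi_u{}^2$ and $\tfrac12\psi_z{}^2$ and add the results.

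For $\pm\psi\psi_u$ the rule gives at once $\pm\sum_{i,j}\binom{k}{i}\binom{\ell}{j}\psi_{k-i,\ell-j}\,\psi_{i+1,j}$ (placing the $u$-derivative index on the factor $\psi_u$). For $u\psi_u{}^2$ the factor $u$ is handled by noting that $u$ depends on $u$ alone and that $\pa_u^i u$ vanishes at the origin except when $i=1$; hence $D^{k,\ell}(uH) = k\,D^{k-1,\ell}(H)$ at the origin for any smooth $H$, and applying Leibniz to $H = \psi_u{}^2$ produces $k\sum_{i=0}^{k-1}\sum_{j}\binom{k-1}{i}\binom{\ell}{j}\psi_{k-i,\ell-j}\,\psi_{i+1,j}$. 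The two contributions are then reconciled using the elementary identity $k\binom{k-1}{i} = (k-i)\binom{k}{i}$, which rewrites the second sum with the binomial $\binom{k}{i}$ and weight $(k-i)$; the term $i=k$ may be adjoined freely since its weight is zero, so both sums now run over $i=0,\dots,k$. Combining with the $\pm 1$ from the first term yields precisely the weight $(k-i\pm 1)$ and the first double sum of \eqref{eq1}.

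It remains to identify $\tfrac12 D^{k,\ell}(\psi_z{}^2)$ with the second double sum. Leibniz gives the symmetric expression $\tfrac12\sum_{i=0}^{k}\sum_{j=0}^{\ell}\binom{k}{i}\binom{\ell}{j}\psi_{i,j+1}\,\psi_{k-i,\ell-j+1}$. The key observation is that, after carrying out the $j$-summation, the resulting coefficient $c_i := \sum_{j}\binom{\ell}{j}\psi_{i,j+1}\psi_{k-i,\ell-j+1}$ is invariant under $i\mapsto k-i$, as one sees from the substitution $j\mapsto\ell-j$ together with the commutativity of the product. Pascal's rule splits $\binom{k}{i}$ into $\binom{k-1}{i-1}+\binom{k-1}{i}$, and the symmetry $c_i=c_{k-i}$ (via the reindexing $i\mapsto k-i$, using $\binom{k-1}{k-i}=\binom{k-1}{i-1}$) shows that these two pieces contribute equally, so the factor $\tfrac12$ collapses the symmetric sum to $\sum_{i}\binom{k-1}{i-1}c_i$; after the shift $i\mapsto i+1$ this is exactly the second double sum $\sum_{j}\sum_{i=0}^{k-1}\binom{\ell}{j}\binom{k-1}{i}\psi_{k-i-1,\ell-j+1}\,\psi_{i+1,j+1}$ of \eqref{eq1}.

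I expect the only genuine obstacle to lie in the bookkeeping of this last step: the first two terms combine through a single binomial identity, whereas $\tfrac12\psi_z{}^2$ requires the symmetrisation-and-Pascal argument to break the symmetric $\binom{k}{i}$ sum into the asymmetric $\binom{k-1}{i}$ form recorded in \eqref{eq1}. It is worth remarking that the hypothesis $k\geq 1$ is exactly what makes $\binom{k-1}{i}$ meaningful and allows the factor $u$ to contribute; the case $k=0$, in which $u\psi_u{}^2$ drops out at the origin, yields a relation of a different (boundary) type. As a sanity check I would first verify the identity by hand for $(k,\ell)=(1,0),\,(2,0),\,(1,1)$ before committing to the general index manipulation.
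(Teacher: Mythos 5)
Your proof is correct, and for two of the three terms it coincides with the paper's own argument: the paper also expands $\pm\psi\psi_u$ by the Leibniz rule, also reduces $D^{k,\ell}(u\psi_u{}^2)$ at the origin to $k\,D^{k-1,\ell}(\psi_u{}^2)$ via the factor $u$, and also merges the two sums through $k\binom{k-1}{i}\pm\binom{k}{i}=(k-i\pm1)\binom{k}{i}$, noting as you do that the $i=k$ term is absorbed correctly. The genuine divergence is your treatment of $\tfrac12\psi_z{}^2$. The paper peels off one $u$-derivative first, writing
\begin{equation*}
\frac{\pa^{n}}{\pa u^{k}\pa z^{\ell}}\bigl(\tfrac12\psi_z{}^2\bigr)=\frac{\pa^{n}}{\pa u^{k-1}\pa z^{\ell}}\bigl(\psi_z\psi_{uz}\bigr),
\end{equation*}
so that a single Leibniz expansion with $k-1$ derivatives in $u$ produces the asymmetric $\binom{k-1}{i}$ sum immediately; you instead expand symmetrically with $\binom{k}{i}$ and then collapse the factor $\tfrac12$ by the symmetry $c_i=c_{k-i}$ (valid, via $j\mapsto\ell-j$) combined with Pascal's rule and the reindexing $\binom{k-1}{k-i}=\binom{k-1}{i-1}$. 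Both routes are sound: the paper's preliminary differentiation is shorter and makes the hypothesis $k\geq1$ enter only through the reduced order $k-1$, while your version keeps the Leibniz applications uniform across all three terms and makes explicit why the $\tfrac12$ disappears, at the cost of the symmetrisation bookkeeping you correctly flag as the delicate step. Your framing of necessity and sufficiency as automatic from an algebraic identity, and your remark that $k=0$ yields a boundary-type relation handled separately, both agree with how the paper uses the proposition.
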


Note that the choice of sign in \eqref{der-eq} is incorporated into the first bracket of  \eqref{eq1}.

\begin{theorem} \label{th:free-parameters} Solutions to {\rm \eqref{q0}} and {\rm \eqref{q1}} of the form {\rm \eqref{solution}} with $ \psi_{0,0}\neq 0$ and $\psi_{0,1} \neq 0$ are uniquely characterized by the given data: $\psi_{0, \ell}$,  for $\ell = 0,1,2, \ldots$.
\end{theorem}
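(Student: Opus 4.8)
The plan is to read a solution of the form \eqref{solution} as a formal power series whose coefficients $\psi_{k,\ell}$ are forced, one at a time, by the relations of Proposition \ref{prop:fund}. Since a convergent series solving \eqref{q0} or \eqref{q1} makes the left-hand side vanish identically, all of its partial derivatives \eqref{der-eq} vanish, so \eqref{eq1} holds for every $k\geq1,\ell\geq0$; the missing relations, those with $k=0$, I would obtain directly by differentiating only in $z$ and evaluating at the origin, noting that the term $u\psi_u{}^2$ carries a factor $u$ and so contributes nothing there. I would then prove, by a double induction (an outer induction on the $u$-degree $k$ and, for each fixed $k$, an inner induction on the $z$-degree $\ell$), that every coefficient $\psi_{k,\ell}$ with $k\geq1$ is a polynomial in the prescribed data $\psi_{0,0},\psi_{0,1},\psi_{0,2},\ldots$.

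The engine of the induction is the identification, in the relation indexed by $(k,\ell)$, of a single coefficient of top $u$-degree. In \eqref{eq1} the second factor of the first double sum runs up to $u$-index $k+1$, attained only at $i=k$, whereas the first factor never exceeds $u$-index $k$ and the whole second double sum is capped at $u$-index $k$. Hence $\psi_{k+1,\ell}$ occurs exactly once, in the term $i=k,\,j=\ell$ of the first sum, with coefficient $(k-i\pm1)\binom{\ell}{\ell}\binom{k}{k}\psi_{0,0}=\pm\psi_{0,0}$. Because $\psi_{0,0}\neq0$, this relation can be solved for $\psi_{k+1,\ell}$. The base case $k=0$ is handled the same way: the $z$-only relation reads $\pm\psi_{0,0}\psi_{1,\ell}+(\text{terms in }\psi_{1,j}\text{ with }j<\ell\text{ and in the data})=0$, which determines $\psi_{1,\ell}$.

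The step I expect to be the crux is verifying the triangular structure, namely that once the pivot $\psi_{k+1,\ell}$ is removed, every surviving term of \eqref{eq1} involves only coefficients already computed. There are two families to control: terms of $u$-degree at most $k$, which are known by the outer hypothesis for \emph{every} value of the $z$-index (this is what absorbs the raised-$z$ contributions such as $\psi_{k,\ell+1}$ produced by the $\psi_z{}^2$ part, i.e. the second double sum, which appear multiplied by the known factor $\psi_{0,1}$); and terms of $u$-degree exactly $k+1$ but $z$-degree strictly less than $\ell$, namely $\psi_{k+1,0},\ldots,\psi_{k+1,\ell-1}$ arising from $i=k$, $j<\ell$, which are known by the inner hypothesis. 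Confirming that these exhaust all remaining terms is a matter of tracking the index ranges of the two double sums carefully, and it is here that the bookkeeping must be done with precision.

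Granting this, the assignment sending the relation indexed by $(k,\ell)$ to the coefficient $\psi_{k+1,\ell}$ is a bijection onto $\{(m,\ell):m\geq1,\ell\geq0\}$, so the system is exactly determined: the data $\{\psi_{0,\ell}\}$ forces every other coefficient (uniqueness), and conversely any prescription of the data produces, step by step, a unique formal solution. Finally I would record the role of the second hypothesis: evaluating the relation at $(k,\ell)=(0,0)$ gives $\psi_{1,0}=\mp\psi_{0,1}^2/(2\psi_{0,0})$, so under $\psi_{0,0}\neq0$ the condition $\psi_{0,1}\neq0$ is precisely $\psi_{1,0}\neq0$, the non-characteristic condition $\xi^{\prime}(0)\neq0$ of the Cauchy--Kowalewski reduction \eqref{CK}; it thus singles out the nondegenerate class of analytic solutions being parametrized. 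Convergence of the resulting series is a separate matter, not addressed by this argument.
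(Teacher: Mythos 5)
Your proposal is correct, and it organizes the induction differently from the paper. Both arguments extract the same pivot from Proposition \ref{prop:fund}: the unique coefficient of top $u$-degree in the relation indexed by $(k,\ell)$ is $\pm\psi_{0,0}\psi_{k+1,\ell}$, coming from $i=k$, $j=\ell$ in the first double sum, and both treat the $k=0$ relations separately by differentiating in $z$ alone (the paper does this parenthetically, exactly as you do, noting the factor $u$ kills the middle term). Where you diverge is the ordering of the induction. You run a nested induction, outer on the $u$-degree and inner on the $z$-degree, so that the raised-$z$ term $\psi_{0,1}\psi_{k,\ell+1}$ produced by the $\psi_z{}^2$ part is simply absorbed by the completed outer stage; this is well-founded since each coefficient unfolds through finitely many predecessors (indeed $\psi_{k+1,\ell}$ depends only on the data $\psi_{0,0},\ldots,\psi_{0,k+\ell+1}$). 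The paper instead inducts on the total order $n=k+\ell$, where each relation contains \emph{two} unknowns of order $n+1$, namely $\psi_{k+1,n-k}$ and $\psi_{k,n-k+1}$, and eliminates them by chaining along the anti-diagonal starting from $k=0$, arriving at the explicit identity \eqref{ders-psi} with multipliers $(\psi_{0,1})^{k-j}(\psi_{0,0})^{j}$. Your route avoids that elimination entirely; the paper's route buys the closed formula \eqref{ders-psi}, which makes immediately visible how the single new datum $\psi_{0,n+1}$ enters every order-$(n+1)$ coefficient. Your final remark is accurate and, if anything, slightly sharper than the paper's phrasing: the formal determination uses only $\psi_{0,0}\neq0$ (in \eqref{ders-psi} too, the pivot coefficient is $(-1)^k(\psi_{0,0})^{k+1}$), while $\psi_{0,1}\neq0$, equivalent to $\psi_{1,0}=\mp\psi_{0,1}{}^2/(2\psi_{0,0})\neq0$, is the noncharacteristic condition for the Cauchy--Kowalewski existence via \eqref{CK}, which the paper also invokes only in the remark preceding its proof. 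As you say, convergence is a separate matter, addressed in the paper only for the particular families of \S\ref{sec:one} and \S\ref{sec:two}.
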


Note that from either \eqref{q0} or \eqref{q1}, since $\psi_{0,0}, \psi_{0,1} \neq 0$ it follows that $\psi\psi_u \neq 0$ at the origin $(u,z) = (0,0)$ and so the term in the bracket on the right-hand side of \eqref{CK} is non-zero and we can expect to find analytic solutions in a neighbourhood of $(u,z) = (0,0)$.  As previously noted,  provided that $\psi_{0,0} \neq 0$, we may suppose that $\psi_{0,0} = 1$.  

\begin{proof}  To prove the theorem, we apply Proposition \ref{prop:fund} to isolate the highest order derivates of $\psi$ (of order $n+1$) in \eqref{der-eq}.  These occur in the first sum of the proposition when $i = k$ and $j = \ell$ ($k+\ell = n$), and in the second sum when $i = k-1$ and $j = \ell$.  We deduce that for each $n \geq 1$, and each $k = 0,1,\ldots , n$, we have 
$$
\pm \psi_{0,0} \psi_{k+1,n-k} + \psi_{0,1} \psi_{k,n-k+1} + R(n,k) = 0
$$
where the term $R(n,k)$ involves derivatives of $\psi$ of order $\leq n$ (for the case $k = 0$ it suffices to equate  \eqref{der-eq} to zero with $k = 0$, $\ell = n$ and set $u = 0$).  Recall that by hypothesis, $\psi_{0,1} \neq 0$.   Combine successive equations:
$$
\begin{array}{lcc}
\pm \psi_{0,0} \psi_{k+1,n-k} + \psi_{0,1} \psi_{k,n-k+1} + R(n,k) & = & 0 \\
\pm \psi_{0,0} \psi_{k+2,n-k-1} + \psi_{0,1} \psi_{k+1,n-k} + R(n,k+1) & = & 0
\end{array}
$$
in the following way.  Begin with $k=0$; multiply the first equation by $\psi_{0, 1}$, the second by $\psi_{0,0}$ and then add or subtract, according to the sign.  Now proceed to $k=1$, and so on, to deduce the identity
\begin{equation} \label{ders-psi}
\pm (\psi_{0,1})^{k+1} \psi_{0,n+1} + (-1)^k(\psi_{0,0})^{k+1} \psi_{k+1,n-k} + \sum_{j = 0}^k (-1)^j(\psi_{0,1})^{k-j} (\psi_{0,0})^j R(n,j) = 0.
\end{equation}
for each $k = 0, \ldots , n$.  
We can now apply induction on $n$.  Thus we suppose that for $k + \ell = n$, all $\psi_{k,\ell}$ are determined by $\psi_{0,0},\psi_{0,1}, \ldots , \psi_{0,n}$.   To begin the induction, from \eqref{ans-1}, we see that $\psi_{1,0}$ is determined by $\psi_{0,0}$ and $\psi_{0,1}$.  But now equation \eqref{ders-psi} shows that for each $k = 0, \ldots , n$ the derivative at the origin $\psi_{k+1,n-k}$ is determined by $\psi_{0,0},\psi_{0,1}, \ldots , \psi_{0,n}, \psi_{0,n+1}$ and the induction is complete.  

\end{proof}


\section{Particular solutions: a $1$-parameter family} \label{sec:one}  Our objective is to determine particular solutions of \eqref{q0} and \eqref{q1} of the form \eqref{solution}.  In some cases, we can recognize the power series and express the solutions in closed form.  As a first case, we take the given data:
\begin{equation} \label{data}
\psi_{0,0} = 1, \quad \psi_{0,1} = c, \quad \psi_{0,\ell} = 0 \ {\rm for} \ \ell \geq 2
\end{equation}
for some arbitrary complex constant $c$.  By Theorem \ref{th:free-parameters}, this uniquely characterizes the solution.  Furthermore, by \eqref{q0} and \eqref{q1}, at the point $(u,z)= (0,0)$, we have $\psi \psi_u = - \frac{1}{2}c^2$ and $\psi \psi_u =  \frac{1}{2}c^2$, respectively.  In particular, if $c \neq 0$, the expression inside the bracket on the right-hand side of \eqref{CK} is non-zero and by the Cauchy-Kowalewksi Theorem we can anticipate a convergent solution in a neighbourhood of the origin.     

\begin{theorem} \label{th:solution-1}  For the given data {\rm \eqref{data}}, the solution $\psi (u,z) = \sum_{k,\ell = 0}^{\infty} a_{k,\ell} u^kz^{\ell}$ to {\rm \eqref{q0}} has the form
\begin{equation} \label{eq2}
\psi (u,z) = 1 + cz + \sum_{k = 1}^{\infty} \sum_{\ell = 0}^{\infty} (-1)^{\ell + 1} c^{\ell + 2k} \left( \begin{array}{c} \ell + 2k - 2 \\ \ell  \end{array} \right) \frac{3^{k-1}(2k-2)!}{2^{k-1}(k+1)!(k-1)!}u^kz^{\ell}\,.
\end{equation}
The solution to {\rm \eqref{q1}} has the form
\begin{equation} \label{eq3}
\psi (u,z) = 1 + cz + \sum_{k = 1}^{\infty} \sum_{\ell = 0}^{\infty} (-1)^{\ell + k-1} c^{\ell + 2k} \left( \begin{array}{c} \ell + 2k - 2 \\ \ell  \end{array} \right) \frac{(2k-2)!}{2^{k}k!(k-1)!}u^kz^{\ell}\,.
\end{equation}
Furthermore, these power series expressions converge absolutely for all $(u,z)$ in a neighbourhood of the origin $(0,0)$.  
\end{theorem}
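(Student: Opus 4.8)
The plan is to lean on the uniqueness already supplied by Theorem~\ref{th:free-parameters}: since the data \eqref{data} pins down the analytic solution of \eqref{q0} (resp.\ \eqref{q1}) uniquely, it suffices to show that the right-hand side of \eqref{eq2} (resp.\ \eqref{eq3}) converges to an analytic function near $(0,0)$, reproduces the data \eqref{data}, and satisfies the relevant equation. The data is immediate, since reading off the $k=0$ terms of \eqref{eq2} gives $a_{0,0}=1$, $a_{0,1}=c$ and $a_{0,\ell}=0$ for $\ell\geq2$; so the real content is convergence and verification of the PDE.

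First I would collapse the sum over $\ell$. The only $\ell$-dependence sits in $(-1)^\ell c^\ell\binom{\ell+2k-2}{\ell}z^\ell$, and the elementary identity $\sum_{\ell\geq0}\binom{\ell+2k-2}{\ell}t^\ell=(1-t)^{1-2k}$ (with $t=-cz$) sums it to $(1+cz)^{1-2k}$. Writing $v=1+cz$ and introducing the similarity variable $w=c^2u/v^2$, the series \eqref{eq2} becomes $\psi=v\,G(w)$, where $G(w)=1-\sum_{k\geq1}b_kw^k$ with $b_k=\frac{3^{k-1}(2k-2)!}{2^{k-1}(k+1)!(k-1)!}$; the series \eqref{eq3} likewise becomes $v\,G(w)$ with $b_k=(-1)^{k-1}\frac{(2k-2)!}{2^kk!(k-1)!}$.

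Next I would compute $\psi_u=c^2v^{-1}G'$ and $\psi_z=c(G-2wG')$, and substitute into \eqref{q0}; after dividing by $c^2$ every explicit power of $v$ cancels and the equation reduces to the single first-order ODE
\[
w(1+2w)\,G'^2+(1-2w)\,GG'+\tfrac12 G^2=0,\qquad G(0)=1,
\]
the variant for \eqref{q1} being $w(1+2w)G'^2-(1+2w)GG'+\tfrac12G^2=0$. Solving the quadratic for $G'$ produces the discriminant $1-6w$ (resp.\ $1+2w$), and the branch analytic at $w=0$ with $G(0)=1$ is the one with $G'(0)=-\tfrac12$ (resp.\ $+\tfrac12$). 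It then suffices to check that the explicit $b_k$ obey the one-index recurrence forced by this ODE; for \eqref{q0} this is $(k+2)\,b_{k+1}=3(2k-1)\,b_k$, an elementary verification, with an analogous signed recurrence $(k+1)b_{k+1}=-(2k-1)b_k$ for \eqref{q1}. This step simultaneously confirms the PDE and identifies the series as the claimed solution.

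Finally, convergence follows from the same coefficients: since $b_{k+1}/b_k=\frac{3(2k-1)}{k+2}\to6$ (resp.\ ratio of moduli $\frac{2k-1}{k+1}\to2$), the $G$-series has radius of convergence $\tfrac16$ (resp.\ $\tfrac12$) in $w$, matching the branch point of the square root; re-expanding, \eqref{eq2} converges absolutely whenever $|cz|<1$ and $|c^2u|<\tfrac16(1-|cz|)^2$, which is a neighbourhood of the origin. The hard part is the verification step: confronting \eqref{q0} directly would require showing that the binomial coefficients satisfy the two-index convolution recurrence \eqref{eq1}, a nontrivial Vandermonde-type identity. The generating-function collapse to $G(w)$ is precisely what trades this for a trivial one-index recurrence, so the bulk of the effort lies in justifying that reduction (term-by-term manipulation, legitimate once convergence is in hand) and in selecting the correct analytic branch of $G'$.
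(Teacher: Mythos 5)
Your overall architecture is sound, and large parts of it are exactly what the paper does: the data check plus uniqueness via Theorem~\ref{th:free-parameters} is the right identification mechanism; the collapse of the $\ell$-sum by Newton's binomial formula and the passage to $v=1+cz$, $w=c^2u/v^2$ is performed verbatim in the paper (just after the theorem, to obtain closed forms); your reduced ODEs $w(1+2w)G'^2+(1-2w)GG'+\tfrac12G^2=0$ and $w(1+2w)G'^2-(1+2w)GG'+\tfrac12G^2=0$, with discriminants $1-6w$ and $1+2w$, are correct; and your convergence estimate ($|cz|<1$, $|c^2u|<\tfrac16(1-|cz|)^2$) is actually more explicit and self-contained than the paper's argument, which applies the ratio test only at $z=0$ and then invokes Cauchy--Kowalewski to get a neighbourhood in $(u,z)$.

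The genuine gap is in the verification step. Your ODE is \emph{quadratic} in $(G,G')$, so substituting $G=1-\sum_{k\geq1}b_kw^k$ and equating coefficients of $w^k$ produces a Cauchy-product (convolution) recurrence in the $b_k$ --- in fact precisely the paper's recurrence \eqref{rec1} (the paper's restriction to $z=0$ is your similarity reduction in disguise). Nothing in the ODE ``forces'' the two-term relation $(k+2)b_{k+1}=3(2k-1)b_k$: that relation is a property of the explicit closed-form coefficients, trivially checkable, and checking it does not show that the series satisfies the ODE. Selecting the analytic branch does not help either, since $2w(1+2w)G'=G\bigl(\sqrt{1-6w}-(1-2w)\bigr)$ still involves the power series of $\sqrt{1-6w}$ and hence again a convolution. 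Establishing that the hypergeometric $b_k$ satisfy the convolution identity is exactly the nontrivial content the paper proves in Appendix~\ref{app:calcs2} via generating functions (Lemmas~\ref{lem:gen1} and \ref{lem:gen2}), and your proposal skips it. The cheapest repair within your own framework: use the two-term recurrence to \emph{sum} $G$ in closed form,
\[
G(w)=\tfrac23+\frac{1-(1-6w)^{3/2}}{27w}\quad\text{for \eqref{q0}},\qquad
G(w)=\tfrac12\bigl(1+\sqrt{1+2w}\bigr)\quad\text{for \eqref{q1}}
\]
(both obtained by integrating Newton's series for $(1-4t)^{-1/2}$, as the paper does with its function $S(t)$), substitute these algebraic functions directly into the quadratic ODEs, and then conclude by Theorem~\ref{th:free-parameters}; the branch discussion then becomes unnecessary. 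Finally, a small sign slip: with your convention $G=1-\sum b_kw^k$, equation \eqref{eq3} gives $b_k=(-1)^k(2k-2)!/(2^kk!(k-1)!)$, whereas your stated $b_k=(-1)^{k-1}(2k-2)!/(2^kk!(k-1)!)$ yields $G'(0)=-\tfrac12$, contradicting the branch $G'(0)=+\tfrac12$ you (correctly) select.
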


\begin{proof}  By ad hoc methods, one can deduce that the series has the form:
$$
\psi (u,z) = 1 + cz + \sum_{k = 1}^{\infty} \sum_{\ell = 0}^{\infty} (-1)^{\ell + 1} c^{\ell + 2k} \left( \begin{array}{c} \ell + 2k - 2 \\ \ell  \end{array} \right) f(k)\,u^kz^{\ell}\,,
$$
for some function $f$ which depends only on $k$.  Then, on setting $f(0) = -1$, we have
$$ 
\begin{array}{lcl}
 \psi\vert_{z=0} & = &  - \sum_{k = 0}^{\infty} c^{2k} f(k) u^k \\
 \psi_u\vert{z=0} & = & - \sum_{k = 0}^{\infty} c^{2k+2} (k+1) f(k+1) u^k \\
 \psi_z\vert{z=0} & = & c\sum_{k = 0}^{\infty} c^{2k}(1+2k-2) f(k) u^k
\end{array}
$$

Recall the Cauchy product of two power series: 
$$
\left( \sum_{i = 0}^{\infty} a_ix^i \right) \left( \sum_{j = 0}^{\infty} b_j x^i\right) = \sum_{k = 0}^{\infty} c_kx^k\,,
$$
where $c_k = \sum_{m = 0}^ka_mb_{k-m}$.  We thereby obtain at $z=0$:
$$
\begin{array}{lcl}
\psi \psi_u & = & \sum_{k = 0}^{\infty}c^{2k+2} \left( \sum_{m = 0}^k(k-m+1)f(m)f(k-m+1)\right) u^k \\
\psi_u{}^2 & = & \sum_{k = 0}^{\infty} c^{2k+4} \left( \sum_{m = 0}^k(m+1)(k - m+1)f(m+1)f(k - m+1) \right) u^k \\
\psi_z{}^2 & = & \sum_{k = 0}^{\infty} c^{2k+2} \left( \sum_{m = 0}^k(2m-1)(2k-2m-1)f(m)f(k-m)\right) u^k
\end{array}
$$
In particular (at $z=0$)
\begin{eqnarray*}
u\psi_u{}^2 & = & \sum_{k = 0}^{\infty} c^{2k+4} \left( \sum_{m = 0}^k(m+1)(k - m+1)f(m+1)f(k - m+1) \right) u^{k+1} \\
&  = & \sum_{k = 1}^{\infty}c^{2k+2} \left( \sum_{m = 0}^{k-1}(m+1)(k-m)f(m+1)f(k-m)\right) u^k\,.
\end{eqnarray*}
First consider \eqref{q0}.  On combining the various terms and equating the different powers $k$ of $u$ to zero, we find that for $k = 0$ we have $f(1) = 1/2$, and for $k \geq 1$, we have
\begin{eqnarray*}
\sum_{m = 0}^k\Big\{ (k-m+1)f(m)f(k-m+1) & + & (m+1)(k-m)f(m+1)f(k-m) \\
 & &   + \frac{1}{2}(2m-1)(2k-2m-1)f(m)f(k-m)\Big\} =0
\end{eqnarray*}
On separating the $m = 0$ part of the first term and replacing $m$ by $m-1$ in the second, we obtain the recurrence:
\begin{eqnarray}
(k+1)f(k+1) &  =  & \sum_{m = 0}^k\Big\{ (m+2)(k - m)f(m+1)f(k-m) \label{rec1}\\
 & & \qquad \qquad + \frac{1}{2}(2m-1)(2k-2m - 1)f(m)f(k-m)\Big\}\,. \nonumber
\end{eqnarray}
In Appendix \ref{app:calcs2} we show that this has solution  
\begin{equation} \label{sol-rec}
f(k) =  \frac{3^{k-1}(2k-2)!}{2^{k-1}(k+1)!(k-1)!} = \frac{1}{k(k+1)}\left( \begin{array}{c} 2k-2 \\ k-1 \end{array} \right) \left(\frac{3}{2}\right)^{k-1}\,.
\end{equation} 
for $k \geq 1$.  Similarly, \eqref{q1} leads to the recurrence
\begin{eqnarray}
(k+1)f(k+1) &  =  & - \sum_{m = 0}^k\Big\{ m(k - m)f(m+1)f(k-m) \label{rec2}\\
 & & \qquad \qquad + \frac{1}{2}(2m-1)(2k-2m - 1)f(m)f(k-m)\Big\}\,, \nonumber
\end{eqnarray}
with solution 
$$
f(k) = (-1)^k\frac{(2k-2)!}{2^kk!(k-1)!}
$$
for $k \geq 1$.

For the convergence, it suffices to set $z=0$ in \eqref{eq2}, to obtain the series in the single variable $u$:
$$
\xi (u) = 1 - \sum_{k = 1}^{\infty}  c^{2k}  \frac{3^{k-1}(2k-2)!}{2^{k-1}(k+1)!(k-1)!}u^k\,.
$$
By the test d'Alembert, this converges absolutely for $|u|<1/6|c|^2$ and by the Cauchy-Kowalewski theorem we have convergence of the extension determined by \eqref{CK} into the $(u,z)$--plane; this extension in particular contains a neighbourhood of the origin.   
\end{proof}

We can find closed expressions for the above power series as follows.  
Recall Newton's generalized binomial formula:
\begin{equation} \label{nbf}
\frac{1}{(1-t)^{r}} = \sum_{n = 0}^{\infty} \left( \begin{array}{c} n +r-1  \\ r -1 \end{array} \right) t^n\,,
\end{equation}
where the series on the right converges absolutely for $|t|<1$.   We can re-write \eqref{eq2} as
$$
\psi (u,z) = 1 + cz - \sum_{k = 1}^{\infty} \left(\sum_{\ell = 0}^{\infty}  \left( \begin{array}{c} \ell + 2k - 2 \\ 2k-2  \end{array} \right)(-cz)^{\ell}\right)  \frac{2(2k-2)!}{3(k+1)!(k-1)!}\left(\frac{3c^{2}u}{2}\right)^k\,.
$$
By Newton's formula
$$
\sum_{\ell = 0}^{\infty}  \left( \begin{array}{c} \ell + 2k - 2 \\ 2k-2  \end{array} \right) (-cz)^{\ell} = \frac{1}{(1+cz)^{2k-1}}
$$
(with the series absolutely convergent for $|cz|<1$), so that
$$
\psi (u,z) = 1 + cz - \tfrac{2}{3}(1+cz) \sum_{k = 1}^{\infty}   \frac{(2k-2)!}{(k+1)!(k-1)!}\left(\frac{3c^{2}u}{2(1+cz)^2}\right)^k\,.
$$
We can now be more specific about the domain of convergence.  Once more, by the d'Alembert test we require that:
$$
\left|\frac{3c^{2}u}{2(1+cz)^2}\right| < 1/4 \quad \Rightarrow \quad |u| < |1+cz|^2/6|c|^2\,,
$$
(with $|z|<1/|c|$) which, for $z=0$ coincides with the interval of convergence established in the above theorem. 

Consider the sum
\begin{equation}\label{sum}
S(t):= \sum_{k = 1}^{\infty} \frac{1}{k(k+1)}\left( \begin{array}{c} 2k-2 \\ k-1 \end{array} \right) t^k\,.
\end{equation}
Then
$$
\frac{\dd^2}{\dd t^2}(tS(t)) = \sum_{k = 1}^{\infty} \left( \begin{array}{c} 2k-2 \\ k-1 \end{array} \right) t^{k-1} = \sum_{k = 0}^{\infty}\left( \begin{array}{c} 2k\\ k \end{array} \right) t^{k} = \frac{1}{\sqrt{1-4t}}\,. 
$$
The latter equality is once more a consequence of Newton's binomial formula and the series converges absolutely for $|t|<1/4$.  We can now integrate this expression to obtain
$$
S(t) = \frac{1}{12t}(1-4t)^{3/2} + c_1 + \frac{c_2}{t}\,,
$$
where the constants are chosed to agree with the form \eqref{sum}, explicitly
$$
S(t) = \frac{1}{12t}(1-4t)^{3/2} - \frac{1}{12t} + \frac{1}{2} \,.
$$
On substituting $t = 3c^{2}u/2(1+cz)^2$, this gives the solution in the form
$$
\psi (u,z)  = \frac{2}{3}(1 + cz) - \frac{\{ (1+cz)^2 - 6c^2u\}^{3/2}}{3^3c^2u} + \frac{(1+cz)^3}{3^3c^2u} \,.
$$
The singularity when $u = 0$ is removable, so this represents an entire analytic solution provided that $t$ is defined for all $(u,z)\in \RR^2$ ($u\geq 0$) and that $  (1+cz)^2 - 6c^2u \neq 0$ for all $(u,z) \in \RR^2$ ($u\geq 0$).  A judicious choice of the constant ensures that this is the case, for example $c =\ii$. 

Any mapping of the form $\phi (x,y,z) = (x + \ii y)u^{-q} \psi (u,z)$ is harmonic if and only if 
$$
q(q-1) \psi - 2(q-1)u\psi_u + u^2 \psi_{uu} + \tfrac{1}{2}u\psi_{zz} = 0.
$$
 It is readily checked from this formula that the above solution is not harmonic.  It thus gives an example of an entire analytic semi-conformal map which is not harmonic.  By comparison, the  mapping $\phi (x,y,z) = \sqrt{x^2+y^2} + \ii z$ is defined and continuous everywhere on $\RR^3$, but it is only semi-conformal and analytic at all points where $x^2+y^2 \neq 0$.       

A similar treatment of equation \eqref{eq3} yields its closed form.  It is convenient to multiply the solution by $2$ to give the simple expression:	
$$
\psi (u,z) = 1+cz +  \sqrt{2c^2u + (1+cz)^2}\,.
$$
Once more, one can choose the constant $c$ so that the term inside the square root never vanishes, however, we recall that $\phi = (x+\ii y)\psi (u,z)/u$, so that $\phi$ has a singularity along the $z$-axis.  In fact one can now check that in this case the fibres of $\phi$ are straight lines and that $\phi$ is harmonic, and so a harmonic morphism.

\section{Solutions with two free parameters} \label{sec:two}  In this section, we concentrate on \eqref{q1} with two free parameters, which has the Hopf map as a particular solution.  Consider the given data:
\begin{equation} \label{data2}
\psi_{0,0} = 1, \quad \psi_{0,1} = c_1, \quad \psi_{0,2} = c_2 \quad \psi_{0,\ell} = 0 \ {\rm for} \ \ell \geq 3
\end{equation}
for arbitrary complex constants $c_1$ and $c_2$.  By Theorem \ref{th:free-parameters}, this uniquely characterizes the solution.  It is convenient to express $c_1$ and $c_2$ in the form
$$
c_1 = \al + \be \qquad c_2 = 2\al \be
$$
for constants $\al$ and $\be$ satisfying $\al + \be \neq 0$.  The Hopf map is then determined by $\al = \be = -\ii$.  
For each $k \geq 1$, we will generate a recurrence relation in $\ell$ for $\psi_{k,\ell}$.  We begin with $k = 1$.   Note that from \eqref{q1}, we have
$$
\psi_{1,0} = \tfrac{1}{2}(\al + \be )^2\,.
$$

Write $\pa_z^{\ell}$ for $\pa^{\ell}/\pa z^{\ell}$.  Then, from \eqref{data2},
$$
\pa_z^{\ell}(\psi \psi_u) = \sum_{s=0}^{2} \left( \begin{array}{c} \ell \\ s \end{array} \right) \pa_z^{s}\psi \, \pa_z^{\ell-s} \psi_u = \psi_{1, \ell} + \ell \psi_{0,1}\psi_{1, \ell - 1} + \tfrac{1}{2} \ell (\ell - 1) \psi_{0,2}\psi_{1, \ell - 2}\,,
$$
and
$$
\pa_z^{\ell}(\tfrac{1}{2}\psi_z{}^2) = \pa_z^{\ell -1}(\psi_z\psi_{zz})\,,
$$
which vanishes for $\ell \geq 3$.  We deduce that
$$ 
\psi_{1,1} = - \tfrac{1}{2}(\al + \be )(\al - \be )^2, \quad \psi_{1,2} = (\al - \be )^2(\al^2 + \al \be + \be^2)
$$
and for $\ell \geq 3$
\begin{eqnarray*}
\psi_{1, \ell} & = & - \ell (\al + \be ) \psi_{1, \ell - 1} - \ell (\ell - 1)\al \be \psi_{1, \ell - 2} \\
& \Rightarrow &  \psi_{1, \ell} + \ell \al \psi_{1, \ell - 1} = - \be \ell (\psi_{1, \ell - 1} + \al (\ell - 1) \psi_{1, \ell - 2})\,.
\end{eqnarray*}
On writing $u_{\ell} = \psi_{1, \ell} + \al \ell \psi_{1, \ell - 1}$ this becomes
$$
u_{\ell} = - \be \ell u_{\ell - 1}
$$
which is solved by 
$$
u_{\ell} = \frac{(-1)^{\ell}}{2} \ell !\, \be^{\ell - 2} u_2 \quad {\rm where} \quad u_2 = \psi_{1, 2} + 2 \al \psi_{1,1} = \be^2(\al - \be )^2;
$$
explicitly,
$$
u_{\ell} = \frac{(-1)^{\ell}}{2} \ell ! \,(\al - \be )^2 \be^{\ell} \quad \Rightarrow \quad \psi_{1, \ell} + \al \ell \psi_{\ell - 1} = \frac{(-1)^{\ell}}{2} \ell !\, (\al - \be )^2 \be^{\ell}
$$
with solution
$$
\psi_{1, \ell} = \frac{(-1)^{\ell}}{2} \ell ! \, (\al - \be )^2 \sum_{r = 0}^{\ell} \al^{\ell - r}\be^r = \frac{(-1)^{\ell}}{2} \ell ! \, (\al - \be ) (\al^{\ell + 1} - \be^{\ell + 1} )
$$
valid for all $\ell \geq 1$.  Note that, by the way we have chosen the constants, the expression is both homogeneous and symmetric in $\al$ and $\be$.   It is instructive to also work through the case $k = 2$.  
We obtain the recurrence
$$
\begin{array}{l}
\psi_{2, \ell} + \ell (\al + \be ) \psi_{2, \ell - 1} + \ell (\ell - 1) \al \be \psi_{2, \ell - 2}  = \\
\ds  \qquad  \frac{(-1)^{\ell + 1}}{2} \ell ! \, (\al - \be )^2 \left\{ (\ell + 1) \al^{\ell + 2} + 2 \al^{\ell + 1} \be + 2 \al^{\ell} \be^2 + \cdots + 2 \al \be^{\ell + 1} + (\ell + 1) \be^{\ell + 2} \right\}
\end{array}
$$
which, on setting $v_{\ell} = \psi_{2, \ell} + \ell \al \psi_{2, \ell - 1}$, becomes
$$
\begin{array}{l}
v_{\ell} + \ell \be v_{\ell - 1} = \\
\ds  \qquad  \frac{(-1)^{\ell + 1}}{2} \ell ! \, (\al - \be )^2 \left\{ (\ell + 1) \al^{\ell + 2} + 2 \al^{\ell + 1} \be + 2 \al^{\ell} \be^2 + \cdots + 2 \al \be^{\ell + 1} + (\ell + 1) \be^{\ell + 2} \right\}
\end{array}
$$
with solution 
$$
\begin{array}{l}
\psi_{2, \ell} =  \\
\ds  \qquad  \frac{(-1)^{\ell + 1}}{2} \ell ! \, (\al - \be ) \Big\{ \tfrac{1}{2} (\ell + 1) (\ell + 2)\al^{\ell + 3} + (\ell + 1)\al^{\ell + 2} \be + (\ell - 1) \al^{\ell + 1} \be^2 + \cdots \\
\qquad \qquad \cdots + (- \ell + 3)\al^3\be^{\ell} + (- \ell + 1) \al^2 \be^{\ell + 1} - (\ell + 1) \al \be^{\ell + 2} - \frac{1}{2} (\ell + 1)(\ell + 2) \be^{\ell + 3}\Big\}
\end{array}
$$
valid for all $\ell \geq 0$.  

Proceeding recursively, one deduces that
$$
\psi_{k, \ell} = \frac{(-1)^{k + \ell + 1}}{2} \ell ! \, (\al - \be )^2 g_{k,\ell}(\al , \be )\,,
$$
where $g_{k, \ell}(\al , \be )$ is a symmetric polynomial in $\al$ and $\be$ homogeneous of degree $\ell + 2(k -1)$.  Although, we do not have the general expression for $g_{k, \ell}(\al , \be )$, we can obtain one when $\ell = 0$.

\begin{theorem} \label{thm:2conv}The solution $\psi (u,z)= \sum a_{k, \ell}u^kz^{\ell}$ to \eqref{q1} with data \eqref{data2} satisfies
$$
\psi (u,0) = \sum_{k = 0}^{\infty} a_{k, 0} u^k
$$
with $a_{0,0} = 1$, $a_{1,0} = \tfrac{1}{2}(\al + \be )^2$, and  
$$
a_{k,0} = \frac{(-1)^{k+1}}{2 k!} \, (\al - \be )^2 (\al + \be )^2Q_k(\al , \be ) \qquad \forall k \geq 2,
$$
where 
$$
Q_k = \frac{(k-2)!}{2^{k-2}} \, \sum_{j = 1}^{k-1}\frac{(2j-1)!\, (2k-2j-1)!}{(j-1)!^2(k-j-1)!^2} \al^{2k-2j-2} \be^{2j-2}\,,
$$
is a symmetric polynomial in $\al$ and $\be$, homogeneous of degree $2k-4$, whose sum of coefficients of the $\al^{2k-2j-2} \be^{2j-2}$ $(j = 1, \ldots k-1)$ is equal to $2^{k-3}k!$.  Furthermore, the series converges aboslutely for all $u$ satisfying $|u| \leq 1/2\mu^2$, where $\mu = \max\{ |\al|, |\be |\}$.   
\end{theorem}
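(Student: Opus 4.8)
\emph{Reduction and the basic recurrence.} The statement concerns only the diagonal slice $\xi(u):=\psi(u,0)=\sum_{k\ge0}a_{k,0}u^k$, where $a_{k,0}=\psi_{k,0}/k!$. The two initial values are immediate: $a_{0,0}=1$ is the normalisation, and evaluating \eqref{q1} at the origin gives $\psi_{1,0}=\tfrac12(\al+\be)^2$, i.e. $a_{1,0}=\tfrac12(\al+\be)^2$. For $k\ge2$ I would run the $\ell=0$ instance of Proposition~\ref{prop:fund} (with the lower sign), which after the collapse of the pure $u$-derivative terms already seen for $k=1,2$ expresses $\psi_{k,0}$ through the lower slice coefficients $\psi_{j,0}$ ($j<k$) together with the convolution $\sum_i\binom{k-2}{i}\psi_{k-1-i,1}\psi_{i+1,1}$ of the \emph{first} $z$-derivative coefficients $\psi_{\cdot,1}$.

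\emph{The closed form and the coefficient identity.} The key is to recognise that this diagonal slice has the closed form
\[
\xi(u)=\tfrac12+\al\be\,u+\tfrac12\sqrt{(1+2\al^2u)(1+2\be^2u)},
\]
which generalises the single radical $\tfrac12\big(1+\sqrt{1+2c^2u}\big)$ of the one-parameter family of Section~\ref{sec:one} and reduces to the polynomial $1+2c^2u$ of the degenerate (Hopf) case $\al=\be=c$. Differentiating twice gives
\[
\xi''(u)=-\tfrac12(\al-\be)^2(\al+\be)^2\big[(1+2\al^2u)(1+2\be^2u)\big]^{-3/2},
\]
and expanding each factor $(1+2\al^2u)^{-3/2}$, $(1+2\be^2u)^{-3/2}$ by Newton's formula \eqref{nbf}, taking the Cauchy product, and integrating twice (which supplies the weight $\tfrac1{k(k-1)}$) reproduces verbatim the prefactor $(-1)^{k+1}/2k!$, the factor $(\al-\be)^2(\al+\be)^2$, and the central binomial weights $\tfrac{(2j-1)!(2k-2j-1)!}{(j-1)!^2(k-j-1)!^2}$ of $Q_k$; this is exactly the two-variable analogue of the evaluation of $S(t)$ carried out in Section~\ref{sec:one}. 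The symmetry and homogeneity of degree $2k-4$ of $Q_k$ are then manifest.

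\emph{Where the difficulty lies.} Justifying the closed form is the crux. The obstruction is that the hierarchy obtained by differentiating \eqref{q1} in $z$ and restricting to $z=0$ does not truncate --- already $\psi_{1,3}\ne0$ --- so the $\ell=0$ relation is on its own circular: replacing $\tfrac12\psi_z{}^2$ by $\psi\psi_u-u\psi_u{}^2$ returns the same identity and determines nothing. I would close the system in one of two ways: either (i) a single induction on $k$ that carries the whole rows $\{\psi_{k,\ell}\}_\ell$ through Proposition~\ref{prop:fund}, so that the required $\psi_{\cdot,1}$ are available when the $\ell=0$ relation is applied (the route begun for $k=1,2$); or (ii) by exhibiting the full two-variable solution $\psi(u,z)$ with data \eqref{data2} and restricting, the correctness then following from the uniqueness in Theorem~\ref{th:free-parameters}. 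I expect (ii), finding and verifying the two-variable antecedent of the radical above, to be the genuinely delicate step.

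\emph{Sum of coefficients and convergence.} Setting $\al=\be=1$ degenerates $[(1+2\al^2u)(1+2\be^2u)]^{-3/2}$ to $(1+2u)^{-3}$, whose $u^{k-2}$-coefficient is $\binom{k}{2}(-2)^{k-2}$; tracking this through the prefactors collapses the sum of the coefficients of $Q_k$ to $\tfrac{k!}{2}2^{k-2}=2^{k-3}k!$. Finally, convergence is read directly off the closed form: the only singularities of $\xi$ are the branch points of the radical at $u=-1/(2\al^2)$ and $u=-1/(2\be^2)$, both of modulus $1/(2\mu^2)$ with $\mu=\max\{|\al|,|\be|\}$. Each is a simple square-root branch point, so the Taylor coefficients decay like $|a_{k,0}|=O\big(k^{-3/2}(2\mu^2)^k\big)$; since $\sum k^{-3/2}<\infty$ this yields absolute convergence on the \emph{closed} disc $|u|\le 1/(2\mu^2)$. (The cruder bound $|Q_k|\le Q_k(1,1)\mu^{2k-4}=2^{k-3}k!\,\mu^{2k-4}$ from the sum-of-coefficients identity gives only the open disc; the boundary needs the $k^{-3/2}$ decay carried by the branch-point structure.)
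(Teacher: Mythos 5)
Your route is genuinely different from the paper's --- the printed proof obtains the expression for $a_{k,0}$ ``by recurrence, together with the aid of the software MAPLE'', proves the coefficient-sum identity $S_k=2^{2k-5}k(k-1)$ by generating functions, and gets convergence from a root-test estimate --- and your closed form for $\xi(u)=\psi(u,0)$ is in fact correct: expanding your $\xi''$ by \eqref{nbf} via $(1-4t)^{-3/2}=\sum_j(2j+1)\binom{2j}{j}t^j$ and a Cauchy product does reproduce exactly the prefactor $(-1)^{k+1}/2k!$ and the weights in $Q_k$, the specialisation $\al=\be=1$ does give $Q_k(1,1)=2^{k-3}k!$, and the branch-point asymptotics are right. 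But as written the proposal has one genuine gap, which you yourself flag: the closed form is only \emph{conjectured} --- checked against the data to low order and against two special cases ($\be=0$ and $\al=\be$) --- while every subsequent step (the formula for $a_{k,0}$, the sum of coefficients, the closed-disc convergence) is derived from it. Agreement at order $\le 3$ plus two one-parameter specialisations does not determine $\xi$, and neither of your proposed closures is executed: route (i) would require producing the full row $\psi_{j,1}$ and feeding it into the $\ell=0$ instance of Proposition \ref{prop:fund} for general $k$, which you do not do. So the argument is incomplete at precisely its load-bearing joint.

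The good news is that your route (ii) goes through cleanly, so the gap is closable. The two-variable antecedent of your radical is
\[
\Psi(u,z)=\tfrac12(1+\al z)(1+\be z)+\al\be\,u+\tfrac12\sqrt{P\,Q}\,,\qquad P=(1+\al z)^2+2\al^2u,\quad Q=(1+\be z)^2+2\be^2u\,.
\]
Writing $A=1+\al z$, $B=1+\be z$ and using $2\al^2u=P-A^2$, $2\be^2u=Q-B^2$, one finds
\[
\Psi_u=\frac{\bigl(\al\sqrt{Q}+\be\sqrt{P}\bigr)^2}{2\sqrt{PQ}}\,,\qquad
u\Psi_u-\Psi=-\frac{\bigl(A\sqrt{Q}+B\sqrt{P}\bigr)^2}{4\sqrt{PQ}}\,,\qquad
\Psi_z=\frac{\bigl(A\sqrt{Q}+B\sqrt{P}\bigr)\bigl(\al\sqrt{Q}+\be\sqrt{P}\bigr)}{2\sqrt{PQ}}\,,
\]
whence $-\Psi\Psi_u+u\Psi_u{}^2+\tfrac12\Psi_z{}^2=\Psi_u\,(u\Psi_u-\Psi)+\tfrac12\Psi_z{}^2=0$ identically, so $\Psi$ solves \eqref{q1}; since $\Psi(0,z)=(1+\al z)(1+\be z)$ realises exactly the data \eqref{data2}, the uniqueness in Theorem \ref{th:free-parameters} identifies $\Psi$ with the solution, and $\Psi(u,0)$ is your $\xi$. (Sanity checks: $\al=\be=-\ii$ recovers the Hopf polynomial \eqref{Hopf}; $\be=0$ recovers the one-parameter solution of \S\ref{sec:one}.) With this lemma inserted your proof is complete, and it is then strictly stronger than the paper's on one point: the paper's estimate $|Q_k(\al,\be)|\le 2^{k-3}k!\,\mu^{2k-4}$ with the root test only yields absolute convergence on the open disc $|u|<1/2\mu^2$, whereas your $O\bigl(k^{-3/2}(2\mu^2)^k\bigr)$ decay from the square-root branch points justifies the closed disc actually claimed in the statement (the degenerate case $\al=\pm\be$, where $\xi$ is a polynomial, being trivial).
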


Note that, if we set $\al = 0$, so that $c_2 = 0$, then the radius of convergence coincides with that obtained in \S\ref{sec:one} for one arbitrary constant.  

\begin{proof}  The expression for $a_{k, 0}$ is obtained by recurrence, together with the aid of the software MAPLE.  The expression for the sum of the coefficients may be obtained by the use of generating functions. Specifically,  we show that 
$$
S_k:= \sum_{j=1}^{k-1} \frac{(2j-1)!}{(j-1)!^2} \frac{(2k-2j-1)!}{(k-j-1)!^2} = 2^{2k-5}k(k-1)\,,
$$
for each $k \geq 2$, which will establish the assertion of the theorem.  Indeed
\begin{eqnarray*}
S_k & = & \sum_{j = 1}^{k-1}(2j-1)(2k-2j-1)\left( \begin{array}{c} 2j-2 \\ j - 1 \end{array} \right) \left( \begin{array}{c} 2k-2j-2 \\ k- j - 1 \end{array} \right) \\ 
 & = & \sum_{j = 0}^m(2j+1)(2m-2j+1)\left( \begin{array}{c} 2j \\ j  \end{array} \right) \left( \begin{array}{c} 2m-2j \\ m- j  \end{array} \right)
\end{eqnarray*}  
where we set $m = k-2$.  However, this is the coefficient $c_m$ in the Cauchy product $\sum_{m=0}^{\infty} c_mt^m = S(t)^2$, where
$$
S(t) = \sum_{j=0}^{\infty} (2j+1) \left( \begin{array}{c} 2j \\ j \end{array} \right) t^j\,.
$$
But
$$
S(t) = 2\sum_{j=0}^{\infty} (j+1)\left( \begin{array}{c} 2j \\ j \end{array} \right) t^j - \sum_{j=0}^{\infty} \left( \begin{array}{c} 2j \\ j \end{array} \right) t^j\,,
$$
where
$$
\sum_{j=0}^{\infty} (j+1)\left( \begin{array}{c} 2j \\ j \end{array} \right) t^j = \frac{\dd}{\dd t}\sum_{j=0}^{\infty} \left( \begin{array}{c} 2j \\ j \end{array} \right) t^{j+1} = \frac{\dd}{\dd t} \frac{t}{\sqrt{1-4t}} = \frac{1-2t}{(1-4t)^{3/2}}\,.
$$
It follows that
$$
S(t) = \frac{2-4t}{(1-4t)^{3/2}} - \frac{1}{\sqrt{1 - 4t}} =  \frac{1}{(1-4t)^{3/2}}
$$
and so by Newton's binomial formula \eqref{nbf}
$$
S(t)^2 = \frac{1}{(1-4t)^3} =  \sum_{m=0}^{\infty} \left( \begin{array}{c} m+2 \\ 2 \end{array} \right) (4t)^m = \sum_{m = 0}^{\infty} 2^{2m-1}(m+2)(m+1)t^m\,.
$$
On replacing $m$ by $k-2$, this gives the required formula for $S_k$.  
   For the radius of convergence, we use the root test for a power series.  

The absolute value 
$$
|a_{k,0}u^k| = \frac{|\al - \be |^2|\al + \be |^2}{2k!} |Q_k(\al , \be )| \, |u|^k\,,
$$
can be estimated as follows.  By symmetry in $\al$ and $\be$, it is no loss of generality to suppose that $|\al / \be |\leq 1$.  If we write
$$
Q_k(\al , \be ) = b_0\al^{2k-4} + b_1 \al^{2k-6}\be^2 + \cdots + b_{k-2}\be^{2k-4}\,,
$$
Then
\begin{eqnarray*}
|Q_k(\al , \be )| & \leq  &  |\be |^{2k-4} \left\{ b_0\left| \frac{\al}{\be}\right|^{2k-4} + b_1\left| \frac{\al}{\be}\right|^{2k-6} + \cdots + b_{k-3}\left|\frac{\al}{\be}\right|^2 +  b_{k-2}\right\} \\
 & \leq & |\be |^{2k-4} (b_0 + b_1 + \cdots b_{k-2}) = |\be |^{2k-4}2^{k-3}k!\,,
\end{eqnarray*}
by the formula for the sum of the coefficients.  
We thus obtain
$$
\lim_{k \ra \infty} |a_{k,0}u^k| = 2|\be |^2|u| \lim_{k \ra \infty}\left| \frac{(\al - \be )^2(\al + \be )^2}{2^4\be^4}\right|^{1/k}
$$
which, for convergence, we require to be $<1$.  If $\al = \be$, then we have convergence for all $u$, otherwise, a sufficient condition is that $2|\be |^2 |u| < 1$.  
\end{proof}

We conclude, by considering the $1$-complex parameter family of solutions defined by $\al =\be$, which has the Hopf map as a special case $(\al = \be = - \ii$).  In this case
$$
\phi (x,y,z) = \frac{1}{x- \ii y}\{ \al^2(x^2+y^2) + (1+ \al z)^2\}\,,
$$
and the fibres are given by equations
$$
\al^2(x^2+y^2+z^2) + 2 \al z - 2 \eta (x-\ii y) + 1 = 0
$$
for a variable parameter $\eta \in \CC$.  If we define the complex $3$-vector
$$
\xi := \frac{1}{\al^2}(-\eta , \ii \eta , \al )
$$
then we see that the fibres are all circles with centre $- \Re \xi$  lying in the plane with normal $\Im \xi$.  The family of maps is a continuous deformation of the Hopf map, with limiting case given when $\al$ is real, when the circles all pass through the point $x = y = 0$, $z = - 1/\al$ and lie in planes containing the $z$-axis -- a so-called \emph{bouquet} of circles.

\appendix
\section{Proof of Proposition \ref{prop:fund}} \label{app:calcs}  We make use of the following lemma for the partial derivative of a product, whose proof is an exercise in induction.

\begin{lemma} \label{lem:der-prod}  For smooth functions $f$ and $g$ 
$$
\forall \, k, \ell \geq 0,\ \frac{\pa^{k + \ell} (fg)}{\pa u^k \pa z^{\ell}} = \sum_{i = 0}^k\sum_{j = 0}^{\ell} \left( \begin{array}{c} k \\ i \end{array} \right)  \left( \begin{array}{c} \ell \\ j \end{array} \right)  \frac{\pa^{k + \ell - i - j}f}{\pa^{k - i}u\,\pa^{\ell - j} z} \, \frac{\pa^{i + j}g}{\pa u^i \pa z^j}\,.
$$  
\end{lemma}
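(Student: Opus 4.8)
The plan is to derive the two-variable formula from the classical single-variable Leibniz rule applied successively in each coordinate, rather than wrestling with the double sum all at once. The one analytic ingredient is that for smooth $f$ and $g$ the mixed partials commute (Schwarz's theorem), so that the order of differentiation is immaterial and we are free to strip off all the $u$-derivatives first and then all the $z$-derivatives.

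Concretely, I would first treat $u$ as the only active variable and apply the one-dimensional Leibniz rule
\[
\frac{\pa^{k}(fg)}{\pa u^{k}} = \sum_{i=0}^{k} \binom{k}{i} \frac{\pa^{k-i}f}{\pa u^{k-i}}\,\frac{\pa^{i}g}{\pa u^{i}}\,,
\]
and then apply $\pa^{\ell}/\pa z^{\ell}$ to both sides. Since $\pa/\pa z$ commutes with $\pa/\pa u$, it passes term by term through the finite sum, giving
\[
\frac{\pa^{k+\ell}(fg)}{\pa u^{k}\pa z^{\ell}} = \sum_{i=0}^{k} \binom{k}{i}\,\frac{\pa^{\ell}}{\pa z^{\ell}}\!\left(\frac{\pa^{k-i}f}{\pa u^{k-i}}\,\frac{\pa^{i}g}{\pa u^{i}}\right).
\]
Each factor $\bigl(\pa^{k-i}f/\pa u^{k-i}\bigr)$ and $\bigl(\pa^{i}g/\pa u^{i}\bigr)$ is again smooth, so a second application of the one-dimensional rule, now in $z$, yields
\[
\frac{\pa^{\ell}}{\pa z^{\ell}}\!\left(\frac{\pa^{k-i}f}{\pa u^{k-i}}\,\frac{\pa^{i}g}{\pa u^{i}}\right) = \sum_{j=0}^{\ell}\binom{\ell}{j}\,\frac{\pa^{k-i+\ell-j}f}{\pa u^{k-i}\,\pa z^{\ell-j}}\,\frac{\pa^{i+j}g}{\pa u^{i}\,\pa z^{j}}\,.
\]
Substituting back and collecting the two binomial factors produces exactly the claimed double sum, once the repeated partials are rewritten in the compact form $\pa^{k+\ell-i-j}f/(\pa^{k-i}u\,\pa^{\ell-j}z)$.

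Alternatively, and in the spirit of the remark preceding the lemma, one can run a single induction on $k+\ell$: the base case $k=\ell=0$ is trivial, and in the inductive step one applies $\pa/\pa u$ (or $\pa/\pa z$) to the order-$(k+\ell)$ formula, distributes the derivative across each product, and recombines the two resulting sums using Pascal's identity $\binom{k}{i}+\binom{k}{i-1}=\binom{k+1}{i}$ in the appropriate variable. I expect the only genuine obstacle in either route to be bookkeeping rather than substance: in the two-step approach it is the careful tracking of which partial order attaches to $f$ versus $g$ so that the index shifts line up, and in the inductive approach it is arranging the reindexing so that Pascal's rule applies cleanly. No input beyond the smoothness of $f$ and $g$ (to license commuting the mixed partials) is required.
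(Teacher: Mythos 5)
Your proposal is correct. Note that the paper offers no written proof at all: it introduces the lemma with the single remark that its ``proof is an exercise in induction,'' so there is no detailed argument to compare against. Your secondary route --- induction on $k+\ell$ with Pascal's identity $\binom{k}{i}+\binom{k}{i-1}=\binom{k+1}{i}$ --- is exactly the exercise the authors have in mind. Your primary route is slightly different and arguably cleaner: by Schwarz's theorem (licensed by smoothness) the operators $\pa/\pa u$ and $\pa/\pa z$ commute, so you may apply the classical one-variable Leibniz rule first in $u$ and then, term by term, in $z$; the only induction then needed is the one already buried in the one-dimensional Leibniz rule, and the two binomial coefficients appear factored from the start rather than having to be recombined at the end. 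Both arguments are complete; the factorized version buys you less bookkeeping, at the cost of explicitly invoking the equality of mixed partials, which a direct double induction can in principle avoid if one fixes the order of differentiation throughout.
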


We now compute \eqref{der-eq}.  Suppose that $n = k + \ell$ with $k \geq 1$ and $\ell \geq 0$.  First we have
$$
\frac{\pa^n}{\pa u^k\pa z^{\ell}} (\psi \psi_u) = \sum_{i = 0}^k\sum_{j = 0}^{\ell} \left( \begin{array}{c} k \\  i \end{array} \right)  \left( \begin{array}{c} \ell \\  j \end{array} \right) \pa_u^{k-i}\pa_z^{\ell - j} \psi \cdot \pa_u^{i+1}\pa_z^j\psi\,,
$$
which, at $(u,z) = (0,0)$, gives
$$
\sum_{i = 0}^k\sum_{j = 0}^{\ell} \left( \begin{array}{c} k \\  i \end{array} \right)  \left( \begin{array}{c} \ell \\  j \end{array} \right) \psi_{k-i, \ell - j} \psi_{i+1, j}\,.
$$
For the next term we have
\begin{eqnarray*}
\frac{\pa^n}{\pa u^k\pa z^{\ell}} (u\psi_u^2) & =&  \sum_{i = 0}^k\sum_{j = 0}^{\ell} \left( \begin{array}{c} k \\  i \end{array} \right)  \left( \begin{array}{c} \ell \\  j \end{array} \right) \pa_u^{i}\pa_z^{ j} (u) \cdot \pa_u^{k-i}\pa_z^{\ell - j}(\psi_u^2) \\
 & = & u \pa_u^k\pa_z^{\ell} (\psi_u^2) + k \pa_u^{k-1}\pa_z^{\ell} (\psi_u^2) \\
& = & u \pa_u^k\pa_z^{\ell} (\psi_u^2) + k \sum_{i = 0}^{k-1}\sum_{j=0}^{\ell} \left( \begin{array}{c} k-1 \\ i \end{array} \right) \left( \begin{array}{c} \ell \\  j \end{array} \right) \pa_u^{k-i}\pa_z^{\ell - j} \psi \cdot \pa_u^{i+1}\pa_z^j\psi\,.
\end{eqnarray*}
At $(u,z) = (0,0)$ this gives
$$
k  \sum_{i = 0}^{k-1}\sum_{j=0}^{\ell} \left( \begin{array}{c} k-1 \\ i \end{array} \right) \left( \begin{array}{c} \ell \\  j \end{array} \right) \psi_{k-i, \ell - j} \psi_{i+1, j}\,.
$$
Note that for $i \leq k-1$, we have
$$
k\left( \begin{array}{c} k-1 \\ i \end{array} \right) \pm \left( \begin{array}{c} k \\ i  \end{array} \right) =  ( k-i \pm 1) \left( \begin{array}{c} k \\ i \end{array} \right)\,,
$$
which is also defined for $i = k$ and gives the correct term.  Finally,  
\begin{eqnarray*}
\frac{\pa^n}{\pa u^k\pa z^{\ell}} (\tfrac{1}{2}\psi_z^2) & = & \frac{\pa^n}{\pa u^{k-1}\pa z^{\ell}} (\psi_z\psi_{uz}) \\
 & = & \sum_{i = 0}^{k-1}\sum_{j=0}^{\ell}   \left( \begin{array}{c} k-1 \\ i \end{array} \right) \left( \begin{array}{c} \ell \\  j \end{array} \right) \pa_u^{k-i-1}\pa_z^{\ell - j + 1} \psi \cdot \pa_u^{i+1}\pa_z^{j+1}\psi\,.
\end{eqnarray*}
At $(u,z) = (0,0)$ this gives
$$
\sum_{i = 0}^{k-1}\sum_{j=0}^{\ell}   \left( \begin{array}{c} k-1 \\ i \end{array} \right) \left( \begin{array}{c} \ell \\  j \end{array} \right) \psi_{k - i - 1, \ell - j + 1}\psi_{i+1, j+1}\,.
$$
On combining the various terms, \eqref{eq1} is established.  

\section{The recurrence \eqref{rec1}}\label{app:calcs2}  
We wish to show that the recurrence \eqref{rec1} is solved by \eqref{sol-rec} (with $f(0) = -1$).   Separating the terms which contain $f(0)$, \eqref{rec1} becomes 
\begin{eqnarray*}
(k+1)f(k+1) - (3k-1)f(k) & = & \sum_{m=1}^{k-1} \big\{ (m+2)(k-m)f(m+1) f(k-m) \\
 & & \qquad  + \tfrac{1}{2}(2m-1)(2k-2m-1)f(m)f(k-m)\big\}\,.
\end{eqnarray*}
However, the solution \eqref{sol-rec} is also characterized by the recurrence relation $f(k+1) = 3(2k-1)f(k) / (k+2)$ for $k\geq 1$ with $f(1) = 1/2$, so we may use this to rewrite the second term in the above sum, to obtain the equivalent relation  
\begin{equation}\label{rec3}
(k+1)f(k+1) - (3k-1)f(k) 
 =  \tfrac{1}{6} \sum_{m=1}^{k-1} (m+2)\big( 8(k-m)  -1\big)     f(m+1)f(k-m)
\end{equation}
If we now substitute the hypothetical solution $\ds f(k) =   \frac{1}{k(k+1)}\left( \begin{array}{c} 2k-2 \\ k-1 \end{array} \right) \left(\frac{3}{2}\right)^{k-1}$, the right-hand side becomes
\begin{equation} \label{rec-reformed}
\tfrac{1}{6}\left(\tfrac{3}{2}\right)^{k-1} \sum_{m=1}^{k-1}     \frac{(8(k-m)  -1)}{(m+1)(k-m)(k-m+1)}\left( \begin{array}{c} 2m \\ m \end{array} \right)\left( \begin{array}{c} 2k-2m-2 \\ k-m-1 \end{array} \right)\,.
\end{equation}
We can now exploit generating functions to evaluate this sum.  

\begin{lemma}\label{lem:gen1}
\begin{eqnarray*}
\sum_{m=1}^{k-1} \frac{1}{(m+1)(k-m+1)} \left( \begin{array}{c} 2m \\ m \end{array} \right)\left( \begin{array}{c} 2k-2m-2 \\ k-m-1 \end{array} \right)  =  \hspace{3cm} \\
\frac{1}{12(k+2)} \left( \begin{array}{c} 2k+2 \\ k+1 \end{array} \right) 
+ \frac{1}{2(k+2)} \left( \begin{array}{c} 2k \\ k \end{array} \right) - \frac{1}{k+1} \left( \begin{array}{c} 2k-2 \\ k-1 \end{array} \right)  
\end{eqnarray*} 
\end{lemma}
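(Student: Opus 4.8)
The plan is to recognise the summand as a Cauchy product of two Catalan-type generating functions and then to read off the required coefficient. Writing $n = k-m-1$ in the second binomial factor, the general term becomes $\bigl(\frac{1}{m+1}\binom{2m}{m}\bigr)\bigl(\frac{1}{n+2}\binom{2n}{n}\bigr)$ with $m+n = k-1$; here $\frac{1}{m+1}\binom{2m}{m}$ is the $m$-th Catalan number. I would therefore introduce the two series
$$A(t) = \sum_{m=0}^{\infty}\frac{1}{m+1}\binom{2m}{m}t^m, \qquad B(t) = \sum_{n=0}^{\infty}\frac{1}{n+2}\binom{2n}{n}t^n,$$
so that the Cauchy product gives $[t^{k-1}]\bigl(A(t)B(t)\bigr) = \sum_{m=0}^{k-1}\frac{1}{(m+1)(k-m+1)}\binom{2m}{m}\binom{2k-2m-2}{k-m-1}$. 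The sum in the lemma is exactly this coefficient with the $m=0$ term removed, and that term equals $\frac{1}{k+1}\binom{2k-2}{k-1}$ — precisely the last term already on the right-hand side of the lemma. Hence it suffices to prove that $[t^{k-1}]\bigl(A(t)B(t)\bigr)$ equals the sum of the first two terms on the right.

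Next I would produce closed forms. The function $A$ is the classical Catalan generating function $A(t) = \frac{1-\sqrt{1-4t}}{2t}$. For $B$, starting from the relation $\sum_{n\geq 0}\binom{2n}{n}t^n = (1-4t)^{-1/2}$ already used earlier in the paper, differentiation gives $\frac{\dd}{\dd t}\bigl(t^2 B(t)\bigr) = t(1-4t)^{-1/2}$, so I integrate to get $t^2 B(t) = \int_0^t s(1-4s)^{-1/2}\,\dd s$. Writing $w = \sqrt{1-4t}$, this evaluates to $t^2 B(t) = \frac{1}{24}(1-w)^2(w+2)$, and since $A(t) = \frac{1-w}{2t}$ one obtains the compact form
$$A(t)B(t) = \frac{(1-w)^3(w+2)}{48\,t^3}, \qquad w = \sqrt{1-4t}.$$

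Finally I would extract the coefficient. Expanding $(1-w)^3(w+2)$ and replacing $w^2 = 1-4t$, $w^4 = (1-4t)^2$, the numerator collapses to a polynomial plus $-5(1-4t)^{1/2} + (1-4t)^{3/2}$; for $k\geq 2$ the polynomial part contributes nothing to $[t^{k+2}]$, so $[t^{k-1}](A B) = \frac{1}{48}[t^{k+2}]\bigl(-5(1-4t)^{1/2} + (1-4t)^{3/2}\bigr)$. Using the standard single-variable coefficient formulas $[t^n](1-4t)^{1/2} = -\frac{1}{2n-1}\binom{2n}{n}$ and $[t^n](1-4t)^{3/2} = \frac{6}{n(2n-3)}\binom{2n-2}{n-1}$ with $n = k+2$, and reducing $\binom{2k+4}{k+2}$ and $\binom{2k}{k}$ to $\binom{2k+2}{k+1}$ through the ratio relations, both this quantity and $\frac{1}{12(k+2)}\binom{2k+2}{k+1} + \frac{1}{2(k+2)}\binom{2k}{k}$ reduce to the common value $\frac{5k+4}{12(k+2)(2k+1)}\binom{2k+2}{k+1}$, which completes the proof. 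I expect the main obstacle to be purely bookkeeping: the expansion of $A(t)B(t)$ into half-integer powers of $1-4t$ and the ensuing binomial simplifications must be carried through carefully, but no conceptual difficulty arises once the Catalan / Cauchy-product structure is identified.
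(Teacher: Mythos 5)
Your proposal is correct and follows essentially the same route as the paper: both recognize the sum as a Cauchy product of the Catalan-type series generated by $\frac{1}{m+1}\binom{2m}{m}$ and $\frac{1}{n+2}\binom{2n}{n}$, obtain closed forms in $\sqrt{1-4t}$ by integrating $\sum_n \binom{2n}{n}t^n = (1-4t)^{-1/2}$, and extract the relevant coefficient (the paper works with the shifted series $S(t)=tA(t)$, $R(t)=t^2B(t)$ and removes the $m=0$ term before multiplying via $R(t)(S(t)-t)$, whereas you remove it at the coefficient level, but this is pure bookkeeping). Your closed form $A(t)B(t) = (1-w)^3(w+2)/(48t^3)$ with $w=\sqrt{1-4t}$ matches the paper's expansion, and your final binomial reductions to the common value $\frac{5k+4}{12(k+2)(2k+1)}\binom{2k+2}{k+1}$ check out.
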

\begin{proof}
From
$$
\frac{1}{\sqrt{1-4t}} = \sum_{k=0}^{\infty} \left( \begin{array}{c} 2k \\ k \end{array} \right) t^k
$$
we deduce that 
$$
S(t):= \sum_{k=0}^{\infty} \frac{1}{k+1} \left( \begin{array}{c} 2k \\  k \end{array} \right) t^{k+1} = \int\frac{\dd t}{\sqrt{1-4t}} = - \frac{\sqrt{1-4t}}{2} + \frac{1}{2}\,.
$$
Similarly
$$
R(t):= \sum_{k=0}^{\infty} \frac{1}{k+2} \left( \begin{array}{c} 2k \\ k \end{array} \right) t^{k+2} = \int \frac{t\dd t}{\sqrt{1-4t}} = - \frac{\sqrt{1-4t}}{8} + \frac{(1-4t)^{3/2}}{24} + \frac{1}{12}\,.
$$
Then
$$
S(t)R(t) = t^3 \sum_{k=0}^{\infty} c_kt^k \quad {\rm where} \quad c_k = \sum_{m=0}^k\frac{1}{(m+1)(k-m+2)} \left( \begin{array}{c} 2m \\ m \end{array} \right) \left( \begin{array}{c} 2k-2m \\ k -m \end{array} \right)
$$
It now follows that
$$
S(t)R(t) = tR(t) + t^2 \sum_{k=1}^{\infty} \left\{ \sum_{m=1}^{k-1} \frac{1}{(m+1)(k-m+1)} \left( \begin{array}{c} 2m \\ m \end{array} \right) \left( \begin{array}{c} 2k-2m-2 \\ k-m-1 \end{array} \right) \right\} t^k
$$
On the other hand
\begin{eqnarray*}
R(t)(S(t) - t) & = & \frac{1}{48} \Big( 4 - 8t - 16t^2 - (5-6t)\sqrt{1 - 4t} + (1 -2t)(1-4t)^{3/2}\Big) \\
 & = & \frac{t^2}{12} \sum_{k=1}^{\infty} \Bigg\{\frac{1}{k+2} \left( \begin{array}{c} 2k+2 \\ k+1 \end{array} \right) 
+ \frac{6}{k+2} \left( \begin{array}{c} 2k \\ k \end{array} \right) - \frac{12}{k+1} \left( \begin{array}{c} 2k-2 \\ k-1 \end{array} \right) \Bigg\} t^k 
\end{eqnarray*}
The formula for the sum now follows.  
\end{proof}

\begin{lemma}\label{lem:gen2}
\begin{eqnarray*}
\sum_{m=1}^{k-1} \frac{1}{(m+1)(k-m)(k-m+1)} \left( \begin{array}{c} 2m \\ m \end{array} \right)\left( \begin{array}{c} 2k-2m-2 \\ k-m-1 \end{array} \right) \qquad \qquad \\
  =  \frac{1}{6} \Bigg\{ -\frac{k-1}{(k+2)(2k+1)} \left( \begin{array}{c} 2k+2 \\ k+1 \end{array} \right) 
 + \frac{6(k-1)}{(k+1)(2k-1)} \left( \begin{array}{c} 2k \\ k \end{array} \right) \Bigg\} 
\end{eqnarray*} 
\end{lemma}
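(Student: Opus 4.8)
The plan is to avoid a direct assault on the sum and instead reduce it to the already-established Lemma~\ref{lem:gen1} together with a classical Catalan convolution. The starting point is the partial fraction decomposition
\[
\frac{1}{(k-m)(k-m+1)} = \frac{1}{k-m} - \frac{1}{k-m+1},
\]
which splits the sum $T_k$ on the left-hand side into two pieces. The piece carrying the factor $\tfrac{1}{k-m+1}$ is \emph{exactly} the sum appearing on the left of Lemma~\ref{lem:gen1}, whose value is already known; it enters with a minus sign. So the only genuinely new quantity is
\[
U_k := \sum_{m=1}^{k-1}\frac{1}{(m+1)(k-m)}\binom{2m}{m}\binom{2k-2m-2}{k-m-1},
\]
and we will have $T_k = U_k - (\text{value from Lemma~\ref{lem:gen1}})$.

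First I would recognize both binomial factors of $U_k$ as Catalan numbers. Writing $C_n = \tfrac{1}{n+1}\binom{2n}{n}$, one has $\tfrac{1}{m+1}\binom{2m}{m} = C_m$ and, setting $j = k-m$, $\tfrac{1}{k-m}\binom{2k-2m-2}{k-m-1} = C_{k-m-1}$. Hence $U_k = \sum_{m=1}^{k-1}C_m\,C_{k-m-1}$ is a truncated Catalan convolution. Invoking the convolution identity $\sum_{i=0}^{n}C_i C_{n-i} = C_{n+1}$ (equivalently, that the Catalan generating function satisfies $\mathcal{C}(t) = 1 + t\,\mathcal{C}(t)^2$) with $n=k-1$, and subtracting the single absent $i=0$ term $C_0 C_{k-1} = C_{k-1}$, yields the closed form $U_k = C_k - C_{k-1}$.

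The final step is purely algebraic. Substituting $U_k = C_k - C_{k-1}$ and the value of the second piece from Lemma~\ref{lem:gen1}, I would express everything as a multiple of $\binom{2k}{k}$ using the standard ratios $\binom{2k+2}{k+1} = \tfrac{2(2k+1)}{k+1}\binom{2k}{k}$ and $\binom{2k-2}{k-1} = \tfrac{k}{2(2k-1)}\binom{2k}{k}$. A useful simplification along the way is that the two contributions with a factor $(2k-1)^{-1}$ combine to $-\tfrac{1}{2(k+1)(2k-1)}\binom{2k}{k}$. Clearing the common denominator $6(k+1)(k+2)(2k-1)$ then reduces the claimed equality to the polynomial identity $2k^2+12k-14 = 2k^2+12k-14$, which is immediate.

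I expect the only delicate point to be the bookkeeping in this last reduction: the right-hand side of the lemma is written with the unsymmetric normalizations $\tfrac{k-1}{(k+2)(2k+1)}$ and $\tfrac{6(k-1)}{(k+1)(2k-1)}$, so one must carefully collect the four rational terms coming from $C_k$, $C_{k-1}$, and the three summands of Lemma~\ref{lem:gen1} before the match appears. (Alternatively, one could mirror the proof of Lemma~\ref{lem:gen1} verbatim, taking the Cauchy product of the Catalan series $\mathcal{C}(t)=\tfrac{1-\sqrt{1-4t}}{2t}$ with the generating function $S(t)$ of \eqref{sum} and extracting the coefficient of $t^k$ via Newton's formula \eqref{nbf}; the partial-fraction route is shorter because it recycles Lemma~\ref{lem:gen1}.)
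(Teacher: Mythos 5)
Your proof is correct; I checked it in detail. The partial fraction $\frac{1}{(k-m)(k-m+1)}=\frac{1}{k-m}-\frac{1}{k-m+1}$ does split the sum as $T_k=U_k-L_k$ with $L_k$ the sum of Lemma \ref{lem:gen1}; your identifications $\frac{1}{m+1}\binom{2m}{m}=C_m$ and $\frac{1}{k-m}\binom{2k-2m-2}{k-m-1}=C_{k-m-1}$ are right, and the Catalan convolution $\sum_{i=0}^{n}C_iC_{n-i}=C_{n+1}$ with $n=k-1$, minus the absent $i=0$ term, gives $U_k=C_k-C_{k-1}$. The final bookkeeping also checks out: with $B=\binom{2k}{k}$, one has $C_k=\frac{B}{k+1}$, $C_{k-1}=\frac{B}{2(2k-1)}$, and the two $(2k-1)^{-1}$ contributions do combine to $-\frac{B}{2(k+1)(2k-1)}$; over the common denominator $6(k+1)(k+2)(2k-1)$ both your expression for $T_k$ and the lemma's right-hand side reduce to $\frac{(2k^2+12k-14)\,B}{6(k+1)(k+2)(2k-1)}$, as you predicted (I also confirmed the lemma and your intermediate values numerically at $k=2,3$). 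Your route is genuinely different from the paper's: the paper introduces $T(t)=\sum_{k\geq 1}\frac{1}{k(k+1)}\binom{2k-2}{k-1}t^{k+1}$, derives its closed form $T(t)=-\frac{1}{12}+\frac{t}{2}+\frac{1}{12}(1-4t)^{3/2}$ from $T''(t)=(1-4t)^{-1/2}$, and extracts the coefficient of $t^{k+2}$ in the Cauchy product $T(t)(S(t)-t)$ --- the same template as its proof of Lemma \ref{lem:gen1}. Your reduction is shorter and recycles Lemma \ref{lem:gen1} instead of building a second generating-function computation from scratch, at the cost of importing the Catalan convolution identity as a known fact; since that identity is itself standardly proved by exactly the kind of manipulation ($\mathcal{C}(t)=1+t\,\mathcal{C}(t)^2$, i.e.\ Newton's formula \eqref{nbf} applied to $\sqrt{1-4t}$) that the paper performs explicitly, the analytic content is the same but your presentation outsources it, which is a fair trade and arguably cleaner --- a point you yourself note in your closing parenthetical.
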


\begin{proof}
We proceed as in the last lemma, with $S(t)$ defined as in the above proof.  Define
$$
T(t) := \sum_{k=1}^{\infty} \frac{1}{k(k+1)} \left( \begin{array}{c} 2k-2 \\ k-1 \end{array} \right) t^{k+1}\,.
$$
Then 
$$
T''(t) = \sum_{k=1}^{\infty}  \left( \begin{array}{c} 2k-2 \\ k-1 \end{array} \right) t^{k-1} = \sum_{k=0}^{\infty}  \left( \begin{array}{c} 2k \\ k \end{array} \right) t^k = \frac{1}{\sqrt{1-4t}}\,,
$$
from which we deduce that
$$
T(t) = - \frac{1}{12} + \frac{t}{2} + \frac{1}{12} (1-4t)^{3/2}\,.
$$
As in the proof of Lemma \ref{lem:gen1}, we find that
$$
t^2 \sum_{k=1}^{\infty} \left\{ \sum_{m=1}^{k-1} \frac{1}{(m+1)(k-m)(k-m+1)} \left( \begin{array}{c} 2m \\ m \end{array} \right)\left( \begin{array}{c} 2k-2m-2 \\ k-m-1 \end{array} \right) \right\} t^k = T(t)(S(t) - t)
$$
and the formula follows. 
\end{proof} 

Combining the two lemmas allows us to evaluate \eqref{rec-reformed}, which in turn confirms that \eqref{rec3} is indeed solved by \eqref{sol-rec}.

\end{document}